\theoremstyle{plain}
\newtheorem{theorem}{Theorem}[section]
\newtheorem{lemma}[theorem]{Lemma}
\newtheorem{proposition}[theorem]{Proposition}
\newtheorem{corollary}[theorem]{Corollary}
\theoremstyle{definition}
\newtheorem{definition}[theorem]{Definition}
\newtheorem{remark}[theorem]{Remark}
\theoremstyle{definition}
\def\fnum{equation}
\DeclareMathOperator{\sech}{sech}
\numberwithin{equation}{section}
\begin{document}

	\title[Cheeger constant of convex co-compact hyperbolic 3-manifolds]{Cheeger constant of convex co-compact hyperbolic 3-manifolds}

	\author{Franco Vargas Pallete and Celso Viana}
	
	\address{
		Institut des Hautes \'Etudes Scientifiques, Bures-sur-Yvette, 91440 France
	}
	\email{vargaspallete@ihes.fr}
	
	\address{
		Universidade Federal de Minas Gerais, Belo Horizonte, 30123-970, Brazil
	} 
	\email{celso@mat.ufmg.br}
	
	\thanks{FVP was partially supported by NSF grant DMS-2001997. CV was partially supported by CNPq Grant 405468/2021-0}
	
	\begin{abstract}
		We study the Cheeger isoperimetric constant as a functional in the space of convex co-compact hyperbolic 3-manifolds which are  either  quasi-Fuchsian or acylindrical. We prove that the global  maximum is attained uniquely  at the Fuchsian locus.
	\end{abstract}

	\maketitle

	\section{Introduction}
	\noindent Given a hyperbolic manifold $\mathbb{H}^3/\Gamma$, we define its limit set $\Lambda(\Gamma)$ as the set of accumulation points in $\partial_\infty\mathbb{H}^3 = \mathbb{S}^2$ of an orbit $\Gamma_x$. The set $\Lambda(\Gamma)$ does not depend on the basepoint $x\in\mathbb{H}^3$. We define the \textit{convex core} as the quotient by $\Gamma$ of the convex hull of $\Lambda(\Gamma)$. A  non-compact hyperbolic 3-manifold is called \textit{convex co-compact} if its convex core is compact. Important examples include Schottky,  quasi-Fuchsian and those which are acylindrical. We will detail the last two examples as they are relevant to our results.
	
	We say $\mathbb{H}^3/\Gamma$ is  \textit{quasi-Fuchsian} if its limit set $\varLambda(\Gamma)$ is a closed Jordan curve in $\partial_{\infty}\mathbb{H}^3=\mathbb{S}^2$. Equivalently, $\Gamma$ is a quasi-conformal deformation of a Fuchsian group $\Gamma_0$, i.e.,  a subgroup of $PSL(2,\mathbb{R})$ representing the fundamental group of a closed hyperbolic surface. 
	A famous result states that the Hausdorff dimension of $\varLambda(\Gamma)$ is greater than one unless  $\Gamma$ is  Fuchsian, see \cite{Bowen}. Geometrically, a Fuchsian 3-manifold  $\mathbb{H}^3/\Gamma_0$  is simply  $\Sigma\times \mathbb{R}$  endowed with   the   warped product metric:
	\begin{equation}\label{fuchsian_metric}
		g_F= dr^2 + \cosh^2(r)\,g_{\Sigma},
	\end{equation}
	where $\Sigma$ is the  hyperbolic surface $\mathbb{H}^2/\Gamma_0$. A convex co-compact hyperbolic  $(M^3,\partial M)$ is  \textit{acylindrical} if $\partial M$ is incompressible and if
	every map of the cylinder $C= \mathbb{S}^1\times I$, $f: (C,\partial C)\rightarrow (M,\partial M)$ which takes the components of $\partial C$ to non-trivial homotopy classes in $\partial M$, is homotopic into $\partial M$.  A nice feature that unite acylindrical and quasi-Fuchsian  is the existence of  model geometries when the  topology is fixed. These  model geometries are characterized by having convex core with totally geodesic boundary components, we refer to constructions on these models by using the sub-index $TG$.
	
	In this note we investigate the Cheeger isoperimetric constant in the space of  quasi-Fuchsian  and acylindrical 3-manifolds.
	Recall that the Cheeger  constant of a complete non-compact manifold $M$  is defined as
	\[
	h(M)= \inf \bigg\{ \frac{A(\partial \Omega)}{V(\Omega)}\,\,:\,\, \Omega \subset M \bigg\}
	\]
	where $\Omega$ has compact closure and smooth boundary in $M$. The interest  in this number comes from the classical Cheeger inequality \cite{C,Y}:
	\begin{equation}\label{cheeger_ineq}
		\frac{h^2(M)}{4}\leq \lambda_0(M),
	\end{equation}
	where $\lambda_0$ is the bottom spectrum of the Laplacian of $M$ definded by:
	\[
	\lambda_0(M):=\inf_{f\in C^{\infty}_0(M)} \frac{\int_M |\nabla f|^2\, dM}{\int_M f^2\,dM}.
	\] Computing lower bounds for $h(M)$ is an interesting but difficult problem in geometry;   see  \cite{Schoen,Y} concerning simply connected spaces with upper sectional curvature bounds. A general principle, Buser \cite{Buser}, states that in   manifolds with $Ric_M\geq -(n-1)\kappa$, the numbers $\lambda_0(M)$ and $h(M)$  are essentially equivalent in the sense that $\lambda_0(M)\leq c_n\kappa\, h(M)$ for some dimensional constant $c_n$.
	The literature also shows that the quantities $h(M)$, $\lambda_0$,  the Hausdorff dimension of $\varLambda(\Gamma)$, and other invariants such as the critical exponent  $\delta(\Gamma)$ are all interrelated when the context involves Kleinian  groups \cite{S}.
	
	The study of $\lambda_0$ in the  class  of conformally compact Einstein manifolds  was done in \cite{W}. For convex co-compact hyperbolic 3-manifolds, the work   \cite{W}  shows that  if $ H_2(M^{3},\mathbb{Z})\neq 0$, then $\lambda_0< 1$ unless $M$ is Fuchsian ($\lambda_0=1$);  this  particular class  also follows from results in \cite{Bowen,S}, see Remark 1.2 below. For the general setting of complete manifolds $M^n$ satisfying $Ric_M\geq -(n-1)$ see \cite{LW}.
	It is   natural  to expect  similar rigidity result   regarding the Cheeger  constant  $h(M)$. 
	Building on   our previous work \cite{VV} relating the renormalized volume of convex co-compact hyperbolic 3-manifolds and the isoperimetric profile, we prove:
	\begin{theorem}\label{main_result}
		If $M$ be a convex co-compact hyperbolic 3-manifold which is either acylindrical or quasi-Fuchsian, then
		\[
		h(M) \leq  
		\frac2\alpha \approx 1.66711.
		\]
		where $\alpha$ is the unique positive solution of $\alpha=\coth\alpha$. Equality occurs if and only if, $M$ is $\Sigma\times \mathbb{R}$ endowed with a Fuchsian metric (\ref{fuchsian_metric}). 
	\end{theorem}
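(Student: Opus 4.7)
The plan is to combine an explicit test-domain computation in the Fuchsian model with the isoperimetric comparison developed in our previous work \cite{VV}.  I first compute the Cheeger ratio in the Fuchsian $M_F=(\Sigma\times\mathbb R,g_F)$: for the symmetric slab $\Omega_r=\Sigma\times[-r,r]$ one has $A(\partial\Omega_r)=2\cosh^2(r)\,\mathrm{Area}(\Sigma)$ and $V(\Omega_r)=(r+\sinh r\cosh r)\,\mathrm{Area}(\Sigma)$, hence
\[ \frac{A(\partial\Omega_r)}{V(\Omega_r)}=\frac{2\cosh^2 r}{r+\sinh r\cosh r}. \]
Differentiating gives the critical-point equation $r\sinh r=\cosh r$, i.e.\ $r=\coth r$, whose unique positive solution is $\alpha$; the minimum value is exactly $2/\alpha$.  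This yields $h(M_F)\leq 2/\alpha$, and an isoperimetric symmetrization along the $\mathbb R$-factor upgrades it to equality.

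\emph{Step 2: Tubular neighborhoods of the convex core.}  For general convex co-compact $M$, the Jacobian formulas for the normal exponential map from a pleated surface give, for the tubular neighborhood $N_r$ of the convex core $N_M$,
\[ A(\partial N_r)=A_0\cosh^2 r+L_0\sinh r\cosh r, \]
\[ V(N_r)=V_0+\tfrac{A_0}{2}(r+\sinh r\cosh r)+\tfrac{L_0}{2}\sinh^2 r, \]
where $A_0=A(\partial N_M)$, $V_0=\mathrm{Vol}(N_M)$, and $L_0=L(\lambda_M)$.  Using $\alpha\sinh\alpha=\cosh\alpha$, one obtains the short identity $\alpha A(\partial N_\alpha)-2V(N_\alpha)=L_0-2V_0$, so $N_\alpha$ certifies $h(M)\leq 2/\alpha$ exactly when $L(\lambda_M)\leq 2\,\mathrm{Vol}(N_M)$.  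In the acylindrical totally-geodesic case ($L_0=0$, $V_0>0$) this is automatic and strict, yielding $h(M_{TG})<2/\alpha$ directly.

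\emph{Step 3: Comparison via \cite{VV}, rigidity, and the main obstacle.}  Because the pointwise bound $L_0\leq 2V_0$ may fail for small bendings of a Fuchsian structure, the $r=\alpha$ test alone is insufficient for arbitrary quasi-Fuchsian $M$.  I would therefore pass through the isoperimetric-profile comparison from \cite{VV} relating $I_M$ to the renormalized volume $V_R(M)$; combined with the Bridgeman--Brock--Bromberg positivity $V_R(M)\geq V_R(M_{TG})$ (rigid at $M=M_{TG}$), this should yield the monotonicity $h(M)\leq h(M_{TG})$.  For quasi-Fuchsian, $M_{TG}=M_F$ and Step~1 gives $h(M_F)=2/\alpha$; for acylindrical, Step~2 with $L_0=0$ gives $h(M_{TG})<2/\alpha$.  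Equality $h(M)=2/\alpha$ then forces $V_R(M)=V_R(M_{TG})$ and excludes the acylindrical case, so $M$ must be Fuchsian.  The main obstacle is precisely this Step~3: converting the renormalized-volume and isoperimetric-profile relation from \cite{VV} into a sharp monotonicity of the Cheeger constant under deformation to the $TG$ model, together with the rigidity needed in the equality case.
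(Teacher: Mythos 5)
Your Step 1 correctly identifies the optimal slab and the value $2/\alpha$, but it only proves $h(M_F)\le 2/\alpha$; the assertion that an ``isoperimetric symmetrization along the $\mathbb R$-factor'' upgrades this to equality is not justified, and that is the nontrivial half of the computation. The paper proves the lower bound $h(M_F)\ge 2/\alpha$ (Proposition \ref{cheeger_fuchsian}) by a calibration-type argument: $\Delta(r\tanh r)=2$ in the warped metric, so integrating over an isoperimetric competitor $\Omega$ and applying the divergence theorem together with the pointwise bound $\tanh r + r\sech^2 r\le\alpha$ gives $2|\Omega|\le\alpha\,|\partial\Omega|$, and the equality analysis (forcing $\partial\Omega$ to lie in the slices $r=\pm\alpha$) falls out of the same estimate. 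Without an argument of this kind your equality case for Fuchsian manifolds is unproved. Your Step 2 identity $\alpha A(\partial N_\alpha)-2V(N_\alpha)=L_0-2V_0$ is correct and gives a clean certificate in the totally geodesic case, but, as you note, it does not cover general quasi-Fuchsian metrics.

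The more serious gap is Step 3, which you yourself flag as the ``main obstacle'': it is precisely the content of the theorem and is left undone. The implication ``$V_R(M)\ge V_R(M_{TG})$ plus the profile relation $\Rightarrow h(M)\le h(M_{TG})$'' does not follow formally. The unshifted comparison $I_M\le I_{TG}$ requires the hypothesis $V_R(M)>|\Omega_0|$, which is not known in general; what holds unconditionally is the shifted comparison $I_M(V)\le I_{TG}(V+|\Omega_0|-|\Omega_{TG}|)$ of Theorem \ref{TheoremVV}, and the volume shift is the whole difficulty. The paper handles it by a two-case analysis on whether $|\Omega_0|-|\Omega_{TG}|$ exceeds half the total volume of the Fuchsian Cheeger slabs, using $|\Omega_0|\ge|\Omega_{TG}|$ from \cite{AST} in both cases: when the shift is large, $h(M)$ is bounded by $|\partial\Omega_0|/|\Omega_0|$ together with the area bound for the minimal surface $\partial\Omega_0$ coming from Lemma \ref{area_minimal}; when the shift is small, one tests $J_M$ at the specific volume for which the shifted argument of $I_{TG}$ lands exactly on the Fuchsian Cheeger slab, and then Theorem \ref{TheoremVV} plus Remark \ref{remark:allcheegerequal} close the chain of inequalities. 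None of this mechanism appears in your sketch, and the rigidity statement (equality forces $|\Omega_{TG}|=0$ and $\Omega_0=\Omega_{TG}$, hence the Fuchsian structure) also comes out of this case analysis rather than from renormalized-volume rigidity alone.
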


\begin{corollary}
If $\Omega$ is a  region of finite perimeter  in a Fuchsian 3-manifold $\Sigma\times \mathbb{R}$, then
\[
|\partial \Omega| \geq \frac{2}{\alpha}\, |\Omega|,
\]
with equality if, and only if, $\Omega$ is the slab bounded by  the slices $\Sigma\times \{\pm \alpha\}$.
\end{corollary}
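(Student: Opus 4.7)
The plan is to prove the corollary by a calibration argument on $\Sigma \times \mathbb{R}$ equipped with $g_F = dr^2 + \cosh^2(r)\, g_\Sigma$, rather than appealing directly to Theorem \ref{main_result}. Specifically, I will construct a smooth vector field $X$ satisfying $\dv X \equiv 2/\alpha$ pointwise and $|X| \leq 1$ everywhere, with equality $|X| = 1$ exactly on the two slices $\Sigma \times \{\pm \alpha\}$. Once such an $X$ is available, the Gauss--Green theorem for sets of finite perimeter will give, for any $\Omega$ with $|\Omega|,|\partial\Omega|<\infty$,
\[
\tfrac{2}{\alpha}\,|\Omega| \;=\; \int_\Omega \dv X \, dV \;=\; \int_{\partial^*\Omega} \langle X, \nu\rangle \, d\mathcal{H}^2 \;\leq\; \int_{\partial^*\Omega} |X|\, d\mathcal{H}^2 \;\leq\; |\partial \Omega|,
\]
which is precisely the claimed inequality.

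To build $X$, I will exploit the warped structure and the reflection symmetry $r \mapsto -r$ and take the ansatz $X = f(r)\,\partial_r$ with $f$ odd. A short computation using the volume element $\cosh^2(r)\, dr \wedge dA_\Sigma$ shows that $\dv X = f'(r) + 2\tanh(r)\, f(r)$, so the requirement $\dv X \equiv 2/\alpha$ becomes a linear first-order ODE. Solving with integrating factor $\cosh^2(r)$ and initial condition $f(0) = 0$ yields the unique odd solution
\[
f(r) \;=\; \frac{r + \sinh(r)\cosh(r)}{\alpha\, \cosh^2(r)}.
\]

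The key step will be the pointwise bound $|f(r)| \leq 1$, with equality only at $r = \pm \alpha$. By oddness this reduces to showing $g(r) := \alpha \cosh^2(r) - r - \sinh(r)\cosh(r) \geq 0$ for $r \geq 0$, with equality only at $r = \alpha$. Differentiation yields $g'(r) = 2\cosh(r)\bigl(\alpha \sinh(r) - \cosh(r)\bigr)$, so $g'$ vanishes on $(0, \infty)$ precisely where $\coth(r) = \alpha$, i.e., at $r = \alpha$, and this critical point is a strict minimum. Invoking the defining identity $\alpha \sinh\alpha = \cosh \alpha$ then gives $g(\alpha) = \alpha \cosh^2\alpha - \alpha - \alpha \sinh^2\alpha = 0$, completing the verification.

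Finally, for the equality case of the corollary, both inequalities in the chain above must be sharp: the second forces $|X| = 1$ on $\partial^*\Omega$ up to $\mathcal{H}^2$-null sets, which by the previous step confines $\partial^*\Omega$ to $\Sigma \times \{\pm \alpha\}$, while the first forces $\nu = X/|X|$ there, fixing the orientation. The only finite-volume finite-perimeter $\Omega$ compatible with these two constraints is the slab $\Sigma \times [-\alpha, \alpha]$, yielding uniqueness. I expect the main obstacle to be identifying the correct ODE and verifying the sharp pointwise bound on $f$; once the calibration is in hand, the inequality and the uniqueness of the optimal slab are immediate from the equality analysis of the divergence theorem.
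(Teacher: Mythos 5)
Your proposal is correct and is essentially the paper's own argument (Proposition \ref{cheeger_fuchsian}): your calibration $X=f(r)\,\partial_r$ with $f(r)=\frac{r+\sinh r\cosh r}{\alpha\cosh^2 r}=\frac1\alpha\left(\tanh r+r\sech^2 r\right)$ is exactly $\frac1\alpha\nabla\bigl(r\tanh r\bigr)$, and the paper likewise integrates $\Delta(r\tanh r)=2$ over $\Omega$, applies the divergence theorem, and bounds the boundary term by $\sup|\nabla(r\tanh r)|=\alpha$, attained only at $r=\pm\alpha$. The only differences are cosmetic: you find the field by solving the ODE $f'+2f\tanh r=2/\alpha$ and verify the bound via $g(r)=\alpha\cosh^2 r-r-\sinh r\cosh r\ge 0$, while the paper exhibits the potential directly and maximizes $\tanh r+r\sech^2 r$.
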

 This       result   holds true in every  warped product $\Sigma\times_{\cosh}\mathbb{R}$, where $\Sigma$ is   closed.

\begin{corollary}\label{cor:isoperimetricFuchsian}
Let $\Omega_r$ denotes the slab $\Sigma\times[-r,r]$ in the Fuchsian 3-manifold. If $|\Omega|=|\Omega_r|$ and $r\geq \alpha$, then  \[|\partial\Omega|\geq |\partial \Omega_r|   \]  with equality if, and only if, $\Omega=\Omega_r$.
\end{corollary}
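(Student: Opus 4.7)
The plan is to refine the calibration argument that proves the previous corollary, by localizing it with respect to the threshold slab $\Omega_\alpha$ and then invoking a one-dimensional bathtub rearrangement to control the volume of $\Omega$ outside $\Omega_\alpha$. I would work on $M=\Sigma\times_{\cosh}\mathbb{R}$ with the Lipschitz function $\phi\colon\mathbb{R}\to[-1,1]$ defined by the ODE $\phi'(t)+2\tanh(t)\phi(t)=2/\alpha$ on $[-\alpha,\alpha]$ together with $\phi(\pm\alpha)=\pm 1$, extended by $\phi(t)=\mathrm{sign}(t)$ for $|t|>\alpha$. The explicit solution $\phi(t)=(t+\sinh t\cosh t)/(\alpha\cosh^2 t)$ on $[-\alpha,\alpha]$ combined with the identity $\alpha\tanh\alpha=1$ shows $\phi\in C^1$ and $|\phi|\leq 1$ globally. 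Setting $X=\phi(t)\partial_t$, a direct computation gives
\[
\mathrm{div}(X)=\phi'(t)+2\tanh(t)\phi(t)=\begin{cases} 2/\alpha, & |t|\leq\alpha,\\ 2\tanh|t|, & |t|>\alpha, \end{cases}
\]
and the divergence theorem applied to a finite-perimeter set $\Omega$ with $|\Omega|=|\Omega_r|$ yields
\[
|\partial\Omega|\;\geq\;\int_\Omega\mathrm{div}(X)\,dV\;=\;\tfrac{2}{\alpha}\,|\Omega\cap\Omega_\alpha|\;+\;\int_{\Omega\setminus\Omega_\alpha}2\tanh|t|\,dV.
\]

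To control the last integral I would apply the bathtub/Hardy--Littlewood principle. Since $\Omega\setminus\Omega_\alpha\subset\{|t|\geq\alpha\}$ and $\tanh|t|$ is strictly increasing in $|t|$ there, for prescribed $v_2=|\Omega\setminus\Omega_\alpha|$ the integral is minimized by the annular slab $\Omega_{r''}\setminus\Omega_\alpha$, where $r''\geq\alpha$ is determined by $|\Omega_{r''}|=|\Omega_\alpha|+v_2$. Writing $v_1=|\Omega\cap\Omega_\alpha|$ and using $v_1+v_2=|\Omega_r|$, the two estimates combine into
\[
|\partial\Omega|\;\geq\;L(r'')\;:=\;\tfrac{2}{\alpha}\bigl(|\Omega_r|+|\Omega_\alpha|-|\Omega_{r''}|\bigr)+2|\Sigma|\bigl(\sinh^2 r''-\sinh^2\alpha\bigr).
\]
A direct calculation using $\alpha\sinh\alpha=\cosh\alpha$ shows $L(r)=2|\Sigma|\cosh^2 r=|\partial\Omega_r|$, and
\[
L'(s)=4|\Sigma|\cosh(s)\bigl(\sinh s-\tfrac{\cosh s}{\alpha}\bigr),
\]
which vanishes at $s=\alpha$ and is strictly positive for $s>\alpha$. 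Since $v_2\geq|\Omega_r|-|\Omega_\alpha|$ forces $r''\geq r$, the monotonicity of $L$ closes the chain: $|\partial\Omega|\geq L(r'')\geq L(r)=|\partial\Omega_r|$.

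For the equality case I would unpack each inequality. Equality in the divergence bound forces $\nu=X$ almost everywhere on $\partial\Omega$, so $\partial\Omega\subset\{|X|=1\}=\{|t|\geq\alpha\}$ with $\nu$ parallel to $\partial_t$; equality in the bathtub step forces $\Omega\setminus\Omega_\alpha=\Omega_{r''}\setminus\Omega_\alpha$; and $L(r'')=L(r)$ forces $r''=r$, hence $v_1=|\Omega_\alpha|$, which in turn gives $\Omega\supset\Omega_\alpha$ and then $\Omega=\Omega_r$. The role of the hypothesis $r\geq\alpha$ is precisely to place $r''$ in the region where $L$ is non-decreasing; for $s<\alpha$ the function $L$ is decreasing, consistent with the fact (from the previous corollary) that the thinner slab $\Omega_\alpha$, and not any $\Omega_r$ with $r<\alpha$, is what minimizes the Cheeger ratio. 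The principal technical point that I anticipate requires care is the admissibility of the Lipschitz calibration in the BV divergence theorem, which is standard but warrants a precise reference.
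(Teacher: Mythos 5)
Your proof is correct, but it takes a genuinely different route from the paper's. The paper argues through the isoperimetric profile: it uses existence and regularity of isoperimetric regions, the fact that the profile $J$ coincides with the slab profile $\beta$ both for volumes near $\mathrm{vol}(\Omega_\alpha)$ (by the uniqueness in Proposition \ref{cheeger_fuchsian}) and for all sufficiently large volumes (by \cite{VV}), and then excludes an interior maximum of $\beta-J$ by comparing the mean curvature of an isoperimetric surface with that of the highest slice it touches, via the maximum principle. Your argument instead extends the calibration hidden in the proof of Proposition \ref{cheeger_fuchsian}: on $|t|\le\alpha$ your field $\phi(t)\partial_t$ is exactly $\tfrac1\alpha\nabla\big(r\tanh r\big)$, and the relation $\alpha\tanh\alpha=1$ is precisely what makes it glue $C^1$ to $\mathrm{sign}(t)\,\partial_t$ across $|t|=\alpha$ with $|\phi|\le1$ everywhere; the outer contribution is then controlled by a bathtub rearrangement of the strictly increasing density $2\tanh|t|$. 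I checked the computations: $\phi(\pm\alpha)=\pm1$ and $|\phi|<1$ for $|t|<\alpha$ follow from the monotonicity of $t+\sinh t\cosh t-\alpha\cosh^2t$; the identity $L(r)=2|\Sigma|\cosh^2 r=|\partial\Omega_r|$ uses $\tfrac2\alpha|\Omega_\alpha|=2|\Sigma|\cosh^2\alpha$; and $v_1\le|\Omega_\alpha|$ does force $r''\ge r$, so the monotonicity of $L$ on $[\alpha,\infty)$ closes the chain, with the equality case coming from strict monotonicity of $L$ and uniqueness in the bathtub step. What your route buys is independence from the existence/regularity theory for isoperimetric minimizers and from the large-volume classification in \cite{VV}: the inequality holds for every finite-perimeter competitor directly. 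What it costs is the (standard, but worth citing) generalized divergence theorem for a bounded $C^1$ vector field on a possibly unbounded set of finite perimeter and finite volume, which you correctly flag as the one technical point to pin down.
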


     \begin{remark}
    Related results for warped products $M\times_f I$.   Rafalski \cite{R} proved   in the setting of logarithmically convex warping functions  a relative isoperimetric inequality  for rooms bounded between  part  of a horizontal slice and  a geodesic graph over it. In particular, horizontal slices are isoperimetric when restrict to the class of entire geodesic graphs.  More generally,     Brendle \cite{B} proved Alexandrov's type results for  hypersurfaces with constant mean curvature:  it is shown that closed cmc surfaces  on the region $f^{\prime}>0$ are umbilic, provided that the warping factor $f$ satisfies certain structure conditions.
  \end{remark}

	\begin{remark}\label{rmk1}For any $\varepsilon>0$ there exist   quasi-Fuchsian $3$-manifolds  $M_{QF}$ such that \[\max\{h(M), \lambda_0(M)\}<\varepsilon.\]Indeed, when the convex core $C(M)$ has large volume, then $h(M)$ will become very small since $|\partial C(M)|<8\pi (g-1)$. Alternatively, it follows from \cite{S}  that $\lambda_0(M)=D(\Gamma)(2-D(\Gamma))$, where $D(\Gamma)$ is the Hausdorff dimension of the  limit set $\varLambda(\Gamma)$.  In contrast,  there exist  a  universal  lower bound for $\lambda_0$  in the class of  \textit{classical Schottky} 3-manifolds,  see \cite{D}.\end{remark}

	Below we list   similar rigidity results regarding the Cheeger constant.  The  statements    are straightforward consequences of  stronger  results in \cite{G,M,W2} and  the Cheeger inequality (\ref{cheeger_ineq}):
	\vspace{0.2cm}
	\begin{itemize}

		\item (\textit{Levy-Gromov \cite{G}}): If $M^{n}$ is a compact Riemannian manifold such that $Ric_M\geq n-1$, then
		\[
		h(M^n)\geq h(\mathbb{S}^n)
		\]
		with equality if, and only if, $M$ is isometric to the round sphere $\mathbb{S}^n$.
		\vspace{0.2cm}
		
		\item (\textit{X. Wang \cite{W2}}): Let $\widetilde{M}$ be the universal cover of a compact Riemannian manifold $M^n$. If  $Ric_M\geq -(n-1)$, then 
		\[
		h(\widetilde{M}) \leq n-1
		\] 
		with equality if, and only if, $\widetilde{M}$ is the hyperbolic space $\mathbb{H}^n$.
		\vspace{0.2cm}
		\item (\textit{O. Munteanu \cite{M}}): Let $\widetilde{M}$ be the universal cover of a compact  K\"{a}hler manifold $M^n$ of complex dimension $n$.  If $Ric_M\geq -2(n+1)$, then 
		\[
		h(\widetilde{M}) \leq 2n
		\] 
		with equality if, and only if, $\widetilde{M}$ is the  complex hyperbolic space $\mathbb{CH}^{n}$.
	\end{itemize}

	\section*{Acknowledgments}  
 
 We are thankful to Ian Agol for bringing this question to our attention. We are also thankful to the anonymous referee for their helpful feedback.

	\section{Preliminaries}

		\subsection{Cheeger isoperimetric constant} As mentioned in the introduction, we will be working with convex co-compact hyperbolic 3-manifolds $M$ which are either quasi-Fuchsian or acylindrical.

		\begin{lemma}\label{lemma2}
			If $M$ is convex co-compact hyperbolic 3-manifold with $\chi(\partial M)<0$, then any equidistant  foliation by compact convex sets  $U_r$  satisfies $|\partial U_r|< 2|U_r|$ and 
			\[
			\lim_{r\rightarrow \infty} \frac{|\partial U_r|}{|U_r|}=2.
			\]
		\end{lemma}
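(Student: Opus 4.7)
The plan is to parametrize the foliation as $U_r = N_r(U_0)$ for a fixed compact convex $U_0$ (one may take $U_0$ to be a thickening of the convex core $C(M)$) and derive the area and volume formulas via the normal equidistant flow, combined with Gauss--Bonnet on $\partial U_r$. For $r>0$, $\partial U_r$ is $C^{1,1}$ convex, and parametrizing it by the normal exponential map from $\partial U_0$, the principal curvatures $\kappa_i(r)$ evolve along the normal flow by the Riccati ODE $\kappa_i' = 1 - \kappa_i^2$ and the area element transforms by the Jacobian
\[
J(r;q) = (\cosh r + \kappa_1(q)\sinh r)(\cosh r + \kappa_2(q)\sinh r),
\]
so $|\partial U_r| = \int_{\partial U_0} J(r;q)\,dA_0$ and $|U_r| = |U_0| + \int_0^r |\partial U_s|\,ds$.

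For the asymptotic limit, expanding $J(r;q) = \tfrac{1}{4}(1+\kappa_1)(1+\kappa_2)\, e^{2r} + O(1)$ shows that both $|\partial U_r|$ and $\tfrac{1}{2}|U_r|$ are asymptotic to $\tfrac{1}{4}e^{2r}\int_{\partial U_0}(1+\kappa_1)(1+\kappa_2)\,dA_0$, whence $\lim_{r\to\infty}|\partial U_r|/|U_r| = 2$.

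For the strict inequality, I would differentiate to obtain $\tfrac{d}{dr}(2|U_r| - |\partial U_r|) = 2|\partial U_r| - \int_{\partial U_r} H\,dA$ and apply the pointwise identity $(1-\kappa_1)(1-\kappa_2) = 2 - H + K$ together with the Gauss equation $K = \kappa_1\kappa_2-1$ in $\mathbb{H}^3$ and Gauss--Bonnet on $\partial U_r$ (noting $\partial U_r$ is isotopic to $\partial M$, so $\chi(\partial U_r)=\chi(\partial M)<0$) to get
\[
2|\partial U_r| - \int_{\partial U_r} H\,dA = 2\pi|\chi(\partial M)| + \int_{\partial U_r}(1-\kappa_1)(1-\kappa_2)\,dA.
\]
A direct Jacobian computation then yields $\int_{\partial U_r}(1-\kappa_1)(1-\kappa_2)\,dA = e^{-2r}\int_{\partial U_0}(1-\kappa_1)(1-\kappa_2)\,dA_0 \ge 0$, using that $\kappa_i(0)\le 1$ on $\partial U_0$ (which is transparent when $U_0$ is the convex core: bending lines have zero measure and flat pieces have $\kappa_i=0$). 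Thus the derivative is bounded below by $2\pi|\chi(\partial M)|>0$, so $2|U_r| - |\partial U_r|$ is strictly monotonically increasing with at least linear growth in $r$, yielding $|\partial U_r|<2|U_r|$ throughout the foliation.

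The main obstacle I anticipate is handling the initial condition $2|U_0|-|\partial U_0|$ (so that the monotonicity translates into positivity for all $r$ in the range of the foliation, not merely asymptotically) and carefully justifying Gauss--Bonnet and the Gauss equation on the $C^{1,1}$ surface $\partial U_r$ across the bending locus of the convex core.
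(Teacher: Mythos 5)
The paper offers no argument for this lemma---the stated proof is just a citation to Section 3 of \cite{VV}---so there is no in-paper proof to compare against; I can only assess your argument on its own terms. The limit computation is fine: convexity gives $\int_{\partial U_0}(1+\kappa_1)(1+\kappa_2)\,dA_0\ge|\partial U_0|>0$, so $|\partial U_r|$ and $\tfrac12|U_r|$ grow like the same positive multiple of $e^{2r}$ and the ratio tends to $2$. Your derivative identity is also correct: setting $c_0=\int_{\partial U_0}(1-\kappa_1)(1-\kappa_2)\,dA_0$, one gets $\frac{d}{dr}\bigl(2|U_r|-|\partial U_r|\bigr)=2\pi|\chi(\partial M)|+c_0e^{-2r}$, hence
\[
2|U_r|-|\partial U_r|=\bigl(2|U_0|-|\partial U_0|\bigr)+2\pi|\chi(\partial M)|\,r+\tfrac{c_0}{2}\bigl(1-e^{-2r}\bigr).
\]

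The obstacle you flag at the end is not a technicality; it is fatal to the strategy as set up. First, the sign claim $c_0\ge 0$ via ``$\kappa_i\le 1$ on $\partial U_0$'' is unjustified: convex sets in $\mathbb{H}^3$ routinely have principal curvatures exceeding $1$ (the $t$-neighborhood of the convex core has $\kappa=\coth t>1$ transverse to a bending line, and the pleated boundary itself carries atomic mean-curvature measure along the bending lamination, so ``bending lines have zero measure'' does not dispose of their contribution), and when one principal curvature is $>1$ and the other $<1$ the integrand is negative. Second, and decisively, the initial term $2|U_0|-|\partial U_0|$ is genuinely negative in general: in the Fuchsian manifold with $U_r=\Sigma\times[-r,r]$ one has $|\partial U_r|/|U_r|=2\cosh^2 r/(r+\sinh r\cosh r)\to\infty$ as $r\to 0^+$, so the asserted inequality $|\partial U_r|<2|U_r|$ actually \emph{fails} for small $r$ for this perfectly good equidistant convex foliation, and no monotonicity argument launched from the innermost leaf can yield positivity throughout. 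What your computation does deliver---and all the paper actually uses, namely $0<h(M)<2$---is the bound $2|U_r|-|\partial U_r|\ge 2\pi|\chi(\partial M)|\,r-C$ for a constant $C$ depending on $U_0$, hence the inequality for all sufficiently large $r$ together with the limit. You should either locate the normalization implicit in \cite{VV} that makes the ``every leaf'' phrasing correct, or restate what your argument proves as an eventual inequality.
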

		\begin{proof}
			See Section 3 in \cite{VV}. 
		\end{proof}
		
		\noindent	 In particular, the Cheeger constant satisfies $0<h(M)<2$. In addition, we add that in the class of manifolds we are considering in this note, there exist a global minimizer  for $h(M)$, i.e., there is $\Omega$ such that $h(M) |\Omega|=|\partial \Omega|$  which  corresponds to a solution of the standard  isoperimetric problem.
		
		\begin{lemma}\label{lemma3}
			If $\Omega$ is an isoperimetric region of  mean curvature $H$ such that $|\partial \Omega|=h(M)|\Omega|$, then $2H=h(M)$ and $\partial \Omega$ is strongly stable.
		\end{lemma}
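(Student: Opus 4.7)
My approach is to encode both conclusions as consequences of $\Omega$ being an \emph{unconstrained} global minimizer of the single scalar functional
\[
\mF(\Omega') \,:=\, |\partial\Omega'| - h(M)\,|\Omega'|
\]
on the class of admissible regions. By the very definition of $h(M)$ one has $\mF(\Omega') \geq 0$ for every admissible $\Omega'$, and by hypothesis $\mF(\Omega) = 0$. Hence along \emph{every} smooth compactly supported one-parameter family $\{\Omega_t\}$ with $\Omega_0 = \Omega$, the scalar function $t \mapsto \mF(\Omega_t)$ attains its minimum at $t=0$, so
\[
\mF'(0) = 0, \qquad \mF''(0) \geq 0,
\]
with no mean-value (volume) constraint on the variation. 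This is the crucial difference from the standard isoperimetric setup.

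To derive $2H = h(M)$, I would compute $\mF'(0)$ along a normal variation of $\partial\Omega$ with speed $\phi \in C^\infty_c(\partial\Omega)$. The classical first-variation formulas give $\mF'(0) = \int_{\partial\Omega}(2H - h(M))\,\phi\, dA$, and vanishing for all $\phi$ forces the mean curvature to be identically $h(M)/2$.

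For strong stability, I would compute $\mF''(0)$ along the same family. Using $V''(0) = \int_{\partial\Omega} 2H\,\phi^2\, dA$ and the standard second variation of area,
\[
A''(0) = \int_{\partial\Omega}\bigl(|\nabla\phi|^2 - (|A|^2 + \mathrm{Ric}_M(\nu,\nu))\phi^2 + 4H^2\phi^2\bigr)\, dA,
\]
obtained from integrating the Jacobi evolution $\partial_t(2H) = -\Delta\phi - (|A|^2 + \mathrm{Ric}_M(\nu,\nu))\phi$, I would combine them into
\[
\mF''(0) = \int_{\partial\Omega}\bigl(|\nabla\phi|^2 - (|A|^2 + \mathrm{Ric}_M(\nu,\nu))\phi^2\bigr) dA + \int_{\partial\Omega}\bigl(4H^2 - h(M)\cdot 2H\bigr)\phi^2\, dA.
\]
The only interesting step is the observation that the critical identity $2H = h(M)$ proved in the previous paragraph forces the second integrand to vanish identically, leaving $0 \leq \mF''(0) = \int_{\partial\Omega}(|\nabla\phi|^2 - (|A|^2 + \mathrm{Ric}_M(\nu,\nu))\phi^2)\, dA$ for every $\phi \in C^\infty_c(\partial\Omega)$, which is precisely the strong stability inequality.

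The main point to be careful with is really just this cancellation and the accompanying sign and convention bookkeeping (mean curvature as average vs.\ sum of principal curvatures). It is this cancellation—driven entirely by $2H = h(M)$—that upgrades the usual volume-preserving stability enjoyed by any CMC surface to the strong, unconstrained stability of the Cheeger minimizer; once the conventions are fixed, everything is a direct application of the first and second variation formulas.
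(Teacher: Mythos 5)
Your proof is correct and follows essentially the same route as the paper: both treat $\Omega$ as an unconstrained global minimizer of the Brane functional $|\partial\Omega'|-h(M)|\Omega'|$, extract $2H=h(M)$ from the first variation, and obtain strong stability from the second variation, where (with $\mathrm{Ric}(\nu,\nu)=-2$ in the hyperbolic setting) the cancellation you highlight yields exactly the paper's inequality $0\leq \int_{\partial\Omega}|\nabla f|^2-(|A|^2-2)f^2$. The only difference is that the paper states the cancelled second-variation formula directly rather than displaying the $4H^2-2H\,h(M)$ bookkeeping.
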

		
		\begin{proof}
			The Brane action  $\mathcal{F}= Area - h(M)\,Vol$ is non-negative and attains its global minimum  at $\Omega_V$. If $H$ is the mean curvature in the inward direction $-N$, then the first variation of $\mathcal{F}$ at $\Omega_V$ is given below
			\[
			\delta \mathcal{F}(fN)= \int_{\partial \Omega}f\,\big(2H -h(M) \big)\,d\Sigma=0 \quad\quad \forall 
			f\in C^{\infty}(\partial \Omega)
			\]
			Therefore, $h(M)=2H$. In particular, $H$ is positive. In addition,  the second variation of area and volume  implies
			\[
			0\leq \delta^2\mathcal{F}(f,f)= \int_{\partial \Omega} |\nabla f|^2 -\big(-2+|A|^2\big)f^2\,d\Sigma   \quad\quad \forall 
			f\in C^{\infty}(\partial \Omega).
			\]
			In other words, $\Sigma$ is a strongly stable constant mean curvature surface.
		\end{proof}
	
		\subsection{Cheeger constant of  warped  3-manifolds}
	In this section we study the Cheeger isoperimetric constant of $M=\Sigma\times \mathbb{R}$  endowed with the warped metric 
	\[
	g_0= dr^2 + \cosh^2(r) g_{\Sigma}.
	\]
	Here, $\Sigma$ is a general closed surface with a metric $g_\Sigma$ not necessarily hyperbolic.
	Let $\Sigma_r$ be the slice $\Sigma \times\{r\}$, then its area satisfies $A(r)=|\Sigma_0|\cosh^2(r)$ and $A^{\prime}(r)= 2|\Sigma| \cosh^2(r) \tanh(r)$. The first variation formula of area implies that the mean curvature of $\Sigma_r$, in the inward direction  $-\partial r$, is $H(r)=\tanh(r)$. Therefore,
	 \[\Delta r= 2\tanh(r).\]
In particular, 
	\[
\Delta \sinh(r)=\cosh(r)\Delta r + \sinh(r)|\nabla r|^2=3\sinh(r).
	\]
		\begin{proposition}\label{cheeger_fuchsian}
			The Cheeger constant of  the 3-manifold $(M,g_0)$ is equal to  
			\[
			h(M)=\inf_{x\in (0,+\infty)} \frac{\cosh^2(x)}{\int_{0}^{x}\cosh^2(t)dt}=\frac2\alpha \approx 1.66711.
			\]
		 It is uniquely attained by a slab  bounded by  the union of the slices $r=\alpha=\coth\alpha$  of the warped  metric $g_F$. 
		\end{proposition}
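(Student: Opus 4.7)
The plan is to prove both the value $h(M)=2/\alpha$ and the rigidity claim via a calibration argument, matched against the upper bound obtained from testing the slab family.

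For the upper bound, I would apply the definition of $h(M)$ to $\Omega_r := \Sigma\times[-r,r]$: using $|\partial\Omega_r|=2|\Sigma|\cosh^2(r)$ and $|\Omega_r|=2|\Sigma|\int_0^r\cosh^2(t)\,dt$, the ratio becomes $\cosh^2(r)/\int_0^r\cosh^2(t)\,dt$. A direct differentiation reduces the first-order condition to $r\sinh r = \cosh r$, i.e.\ $r=\coth r$, whose unique positive solution is $\alpha$; substituting back and using $\tanh\alpha=1/\alpha$ gives the minimum value $2\tanh\alpha = 2/\alpha$, so $h(M)\le 2/\alpha$.

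For the matching lower bound I would construct a calibrating vector field of the form $X=\phi(r)\partial_r$ with $\phi$ odd, $|\phi|\le 1$, and $\operatorname{div}X\ge 2/\alpha$ everywhere. Using $\operatorname{div}(\phi(r)\partial_r)=\phi'(r)+2\phi(r)\tanh r$ (which follows from $\Delta r=2\tanh r$ recorded in the excerpt), I set $\phi\equiv 1$ on $[\alpha,\infty)$ and on $[0,\alpha]$ solve the linear ODE $\phi'+2\phi\tanh r=2/\alpha$ with $\phi(0)=0$; the integrating factor $\cosh^2 r$ gives the explicit formula
\[
\phi(r)=\frac{2}{\alpha\cosh^2 r}\int_0^r\cosh^2(t)\,dt.
\]
The identity $\alpha=\coth\alpha$ is precisely what forces $\phi(\alpha)=1$, so the two pieces match $C^1$-smoothly, and the constraint $|\phi|\le 1$ on $[0,\alpha]$ is equivalent to $\cosh^2(r)/\int_0^r\cosh^2(t)\,dt\ge 2/\alpha$ on that interval, which is exactly the fact that $\alpha$ is the global minimizer of the slab ratio from the previous step. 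Once $X$ is in hand, the divergence theorem applied to any admissible $\Omega$ yields
\[
\tfrac{2}{\alpha}|\Omega|\le\int_\Omega\operatorname{div}X\,dV=\int_{\partial\Omega}\langle X,\nu\rangle\,d\sigma\le|\partial\Omega|,
\]
finishing the lower bound.

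For rigidity I would trace the two inequalities: equality in the integrated divergence bound forces $\operatorname{div}X=2/\alpha$ a.e.\ on $\Omega$, and since $\operatorname{div}X>2/\alpha$ strictly on $\{|r|>\alpha\}$, this gives $\Omega\subseteq\{|r|\le\alpha\}$; equality in the pointwise Cauchy--Schwarz on $\partial\Omega$ forces $|\phi(r)|=1$ and $\nu=\pm\partial_r$ there, which only happens at $\{|r|=\alpha\}$. Combining, $\Omega=\Sigma\times[-\alpha,\alpha]$. I expect the main obstacle to be engineering the calibration so that the pointwise bound $|\phi|\le 1$ is compatible with the endpoint value $\phi(\alpha)=1$: the whole argument hinges on the coincidence that $\alpha$ is simultaneously the root of $r=\coth r$, the minimizer of the slab ratio, and the feasibility threshold for the ODE.
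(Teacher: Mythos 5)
Your argument is correct, and like the paper's proof it is a calibration argument: integrate the divergence of a radial vector field over $\Omega$ and compare with the flux through $\partial\Omega$. The difference is in the choice of field and how the two normalizations are distributed. The paper takes $X=\nabla(r\tanh r)$: it first checks that $\tanh r$ is harmonic, hence $\Delta(r\tanh r)\equiv 2$, and then bounds $|X|=\tanh r+r\operatorname{sech}^2 r\le\alpha$ pointwise, with equality only at $|r|=\alpha$; the divergence theorem gives $2|\Omega|\le\alpha|\partial\Omega|$ and rigidity follows because the norm bound is saturated only on the slices $r=\pm\alpha$. You normalize the other way, engineering $X=\phi(r)\partial_r$ with $|\phi|\le 1$ and $\operatorname{div}X\ge 2/\alpha$ by solving $\phi'+2\phi\tanh r=2/\alpha$ on $[0,\alpha]$ and gluing to $\partial_r$ outside; the identity $\alpha=\coth\alpha$ makes the gluing $C^1$, and the feasibility constraint $|\phi|\le 1$ is exactly the statement that $\alpha$ minimizes the slab ratio, which you establish independently in the upper-bound step. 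Both proofs are complete. The paper's field is in closed form and needs no case analysis or ODE solving, while yours makes the equality case slightly more transparent: the strict divergence excess on $\{|r|>\alpha\}$ confines an equality region to the slab, and the condition $|\phi(r)|=1$, $\nu=\pm\partial_r$ pins $\partial\Omega$ to $\{|r|=\alpha\}$ (one should add the final observation that a region contained in the closed slab whose boundary lies in $\{|r|=\alpha\}$ must, by connectedness of the open slab, be the whole slab). A further small advantage of your phrasing is that the lower bound is proved directly for every admissible $\Omega$, so no appeal to existence or regularity of an isoperimetric minimizer is needed to conclude $h(M)=2/\alpha$.
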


		\begin{proof}
		  We start by recalling that the bottom spectrum of $M$ is $\lambda_0=1$. The corresponding eigenfunction is $\varphi(p)= \cosh^{-1}(r)$. Indeed,
			\begin{eqnarray*}
			\Delta \cosh^{-1}(r)&=& -\frac{\sinh(r)}{\cosh^2(r)} \Delta r - \bigg(\frac{\cosh^3(r) - 2\sinh^2(r)\cosh(r)}{\cosh^4(r)}\bigg)|\nabla r|^2 \\
			  &=& -\frac{2\sinh^2(r)}{\cosh^3(r)} - \frac{1}{\cosh(r)} + \frac{2\sinh^2(r)}{\cosh^3(r)}=\,-\, \cosh^{-1}(r).
			\end{eqnarray*}
		Let us now compute $\Delta \tanh(r)$:
		\begin{eqnarray*}
		 \Delta \frac{\sinh(r)}{\cosh(r)}&=& \frac{1}{\cosh(r)} \Delta \sinh(r) + \sinh(r)\Delta \frac{1}{\cos(r)} +2 \langle \nabla \cosh^{-1}(r), \nabla \sinh(r)\rangle \\
		 &=& 3 \tanh(r) - \tanh(r) - 2\tanh(r)=0.
		\end{eqnarray*}
	Using that $\tanh(r)$ is harmonic we deduce that
	\begin{eqnarray}\label{main_eq}
	\Delta \big(r\tanh r\big) &=& \tanh(r)\Delta r+ r\Delta \tanh(r) + 2\langle \nabla r, \nabla \tanh(r)\rangle \nonumber \\
	&=& 2 \tanh^2(r) +2 \sech^2(r)=2
	\end{eqnarray}	
	Let $\Sigma=\partial \Omega$ be an isoperimetric surface  such that $|\partial \Omega|=h(M_F)|\Omega|$. Integrating (\ref{main_eq}) on $\Omega$ and applying the Divergence theorem:
	\begin{eqnarray*}
	2\,|\Omega|= \int_{\partial \Omega} \langle \nabla\big(r\tanh r\big), N\rangle \leq  \int_{\partial \Omega} \big(\tanh r +  r\sech^2 r\big) \langle \nabla r,N\rangle. 
	\end{eqnarray*}		
Note that $-1\leq \langle \nabla r, N\rangle \leq 1$ and  $f(r)=\tanh r + r\sech^2r \geq 0 $ if, and only if, $r\geq 0$.  The function $f(r)$ as a function of $r\in \mathbb{R}_+$  has a global maximum.  Indeed,
\begin{eqnarray*}
f^{\prime}(r)&=& \sech^2(r) + \sech^2(r) + 2r\sech r \big(-\sech^2 r\sinh r\big) \\
&=& \sech^2r\big(2 -2 r\tanh r\big).
\end{eqnarray*}
In other words, $f(r)$ has a unique point of maximum when $\tanh(\alpha)= \frac{1}{\alpha}$. Moreover, $f(r)\leq f(\alpha)= \alpha$. Applying this comparison in the sets $\{r> 0\}$ and  $\{r<0\}$ separately, we obtain 
\[
2\,|\Omega| \leq \alpha\, |\partial \Omega|.
\]
Moreover, the equality occur if, and only if $r(p)= \alpha$ for every $p\in \partial \Omega$. Therefore, $\partial\Omega=\Sigma\times \{-\alpha\}\cup \Sigma\times \{\alpha\}$.
	
		\end{proof}
		
		\begin{remark}\label{remark:allcheegerequal}
	We  apply Proposition \ref{cheeger_fuchsian} in the  spcial case of interest  which corresponds to  Fuchsian 3-manifolds: $\Sigma$ is a closed hyperbolic surface and the metric $g_F= dr^2+ \cosh^2 r\, g_{\Sigma}$ is  hyperbolic. In particular, $h(\Sigma\times_{\cosh} \mathbb{R})=2\alpha^{-1}$, regardless the topology  of the closed surface $\Sigma$. 
	 \end{remark}
 
 \begin{remark} It was proved in \cite{Wang2002,HMR} that if $M^n$ is conformally compact with conformal factor  $g_{\overline{M}}=f^2 g_M$ satisfying $f|_{\partial \overline{M}}=0$, $|\overline{\nabla} f|=1$ near $\partial \overline{M}$, and
 	\[
 	Ric_M \geq -(n-1) \quad \text{and}\quad S_M+ n(n+1)= o(f^2),
 	\]
 	then $h(M)=n$ as long as the Yamabe invariant  $\mathcal{Y}\big(\partial M,[\overline{g}]\big)\geq 0$. Let us compare this result in the warped metric  $\mathbb{S}^2 \times_{\cosh r} \mathbb{R}$. Recall that $h(\mathbb{S}^2\times_{\cosh r}\mathbb{R})<2$. The  Ricci and scalar curvature satisfy
	 \[
	 Ric(\partial r, \partial r)= -2, \quad  Ric(e_j,e_j)= -2\tanh^2 r, \quad \text{and}\quad R_M +6= 4 (1-\tanh^2 r).
	 \]
	In particular, $\mathbb{S}^2\times_{\cosh r} \mathbb{R}$ is locally asymptotic hyperbolic.   Note that $M$ is conformally equivalent to $\mathbb{S}^2\times [-\frac{\pi}{2},\frac{\pi}{2}]$ with conformal factor $f= 2\arctan\big(\tanh \frac{r}{2}\big)$. On the other hand, 
	 \[
	\lim_{f\rightarrow \frac{\pi}{2}^-} \frac{R_M(f)+6}{(f- \frac{\pi}{2})^2}= \lim_{f\rightarrow \frac{\pi}{2}^-} \frac{4(1-\tan^2 (f/2))}{(f - \frac{\pi}{2})^2}= +\infty.
	 \]
		\end{remark}
		
		\begin{lemma} \label{area_minimal}
			If   $\Sigma$ be  a  compact surface in a convex co-compact  3-manifold $M$, then
			\[
			\int_{\Sigma}\big(1-H^2\big)\,d\Sigma\leq 4\pi\big(g(\Sigma)-1\big)
			\]
			with equality if, and only if, the end of $M$ containing $\Sigma$ is Fuchsian.
		\end{lemma}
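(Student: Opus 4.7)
The plan is to derive the inequality from the Gauss equation together with Gauss-Bonnet, and then to analyze equality via the classification of totally umbilic surfaces in $\mathbb{H}^3$. Writing $\kappa_1,\kappa_2$ for the principal curvatures of $\Sigma$ and $H = (\kappa_1+\kappa_2)/2$ for its mean curvature, the Gauss equation in the ambient constant-curvature $-1$ metric reads
\[
K_\Sigma = -1 + \kappa_1\kappa_2 \leq H^2 - 1,
\]
where the last inequality uses $H^2 - \kappa_1\kappa_2 = \tfrac14(\kappa_1-\kappa_2)^2 \geq 0$ and is saturated exactly at umbilical points. Integrating over $\Sigma$ and applying Gauss-Bonnet yields
\[
\int_\Sigma (1-H^2)\, d\Sigma \leq -\int_\Sigma K_\Sigma\, d\Sigma = -2\pi\chi(\Sigma) = 4\pi\bigl(g(\Sigma)-1\bigr),
\]
which is the stated inequality.

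For equality, the pointwise analysis forces $\Sigma$ to be totally umbilic, and the Codazzi equation in a constant-curvature ambient then makes the umbilical factor a constant $H$, because for $A = H\,\mathrm{id}$ the identity $(XH)Y = (YH)X$ on $T\Sigma$ forces $dH \equiv 0$. Restricting attention to the topologically nontrivial case relevant to the end of $M$, namely $g(\Sigma)\geq 2$, the universal cover $\widetilde{\Sigma}$ is a plane, and among the totally umbilic surfaces in $\mathbb{H}^3$ (round spheres, horospheres, totally geodesic planes, and equidistants from such planes) only the equidistant case is compatible, forcing $|H|<1$. Pushing $\Sigma$ along its unit normal by signed distance $\pm\operatorname{arctanh} H$ then produces a closed surface $\Sigma_0 \subset M$ whose principal curvatures both vanish, i.e., a closed totally geodesic surface in $M$.

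Finally, a closed totally geodesic surface in a convex co-compact hyperbolic 3-manifold must lie on the boundary of the convex core: its $\pi_1(\Sigma_0)$-invariant lift to $\mathbb{H}^3$ is a geodesic plane whose asymptotic circle sits inside the limit set $\Lambda(\Gamma)$, so the plane belongs to the convex hull of $\Lambda(\Gamma)$. Hence $\Sigma_0$ is a component of $\partial C(M)$, and the one-sided equidistant foliation from $\Sigma_0$ fills the end of $M$ containing $\Sigma$ with the Fuchsian warped metric $dr^2 + \cosh^2(r)\, g_{\Sigma_0}$, realizing that end as Fuchsian. The main obstacle in the proof is precisely this last step: passing from the purely local umbilical datum on $\Sigma$ to the global identification of the end with a Fuchsian model requires showing the normal exponential flow remains regular across $M$, yields a bona fide closed totally geodesic surface, and that convex-core considerations then pin down the warped product structure on the whole end.
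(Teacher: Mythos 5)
Your proof follows essentially the same route as the paper's: the Gauss equation plus Gauss--Bonnet give the inequality (the paper phrases the umbilicity defect as $|\mathring{A}|^2\ge 0$ rather than $\tfrac14(\kappa_1-\kappa_2)^2\ge 0$), and equality forces $\Sigma$ to be totally umbilic with $H<1$, after which the equidistant flow endows the end with the warped-product metric $dr^2+\cosh^2(r)g_\Sigma$, exactly as in the paper's appeal to Uhlenbeck. The only quibble is your final convex-core step: the fact that the lifted geodesic plane lies \emph{in} the convex hull of $\Lambda(\Gamma)$ does not by itself make $\Sigma_0$ a component of $\partial C(M)$ (one also needs that the limit set on the far side of the plane is just its boundary circle, which follows from the product foliation of that side) --- but this identification is not actually needed, since the warped-product structure of the equidistant foliation already exhibits the end containing $\Sigma$ as Fuchsian.
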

		\begin{proof}
			Since $\Sigma$ is a surface in a hyperbolic manifold, we have by the Gauss equation that $-1=K + \frac{|\mathring{A}|^2-2H^2}{2}$.  The statement then follows from the Gauss-Bonnet Theorem and $|\mathring{A}|^2\geq 0$. If equality occur, then $H<1$,  $\mathring{A}=0$ and $\Sigma$ is hyperbolic of curvature $-1+H^2$. The equidistant flow starting at $\Sigma$ will produce a  foliation of $M$ by totally umbilical surfaces. The metric $g_M$ in this coordinates is $g_M= dr^2+ \cosh^2(r)g_{\Sigma}$, see \cite{U}.
		\end{proof}
	
	\begin{remark}
	In case $\Sigma$ is strongly stable constant mean curvature surface, then we have  the strict lower bound estimate:
	\begin{eqnarray}\label{lower_bound_energy}
	\int_{\Sigma} \big(1-H^2\big) d\Sigma > 2\pi \big(g(\Sigma)-1\big).
	\end{eqnarray}
	Indeed, the strongly stability applied to the test function $f=1$ implies that
	$
	\int_{\Sigma} \big(2- |A|^2\big)d\Sigma \geq 0.
	$
Now we apply the Gauss-Bonnet  Theorem to the Gauss equation $|A|^2=4H^2-2K-2$.
	 The equality in (\ref{lower_bound_energy}) is only attained when $H=1$ and $\Sigma$ is a horotorus (quotient of an horosphere). This follows since equality would imply that $f=1$ is an eigenfunction of the Jacobi operator $L=\Delta -2+ |A|^2$. In particular, $|A|^2=2$. As $\Sigma$ contains an umbilical point, $\Sigma$ must be totally umbilical with $H=1$. Hence, $\Sigma$ is a horotorus.
	\end{remark}
		
		\section{Proof of the main result}
		
		\begin{definition} For a convex co-compact hyperbolic $3$-manifold $M$ which is either quasi-Fuchsian or acylindrical, the outermost region  $\Omega_0$ is defined as the largest compact region in $M$ such that $\partial \Omega_0$ is a minimal surface and $M-\Omega_0$ contains no other minimal surface homologous to the conformal boundary $\partial M$.
		\end{definition}
		
		The outermost isoperimetric profile $I_M: (0,+\infty)\rightarrow \mathbb{R}$ is defined as follows:
		\begin{eqnarray*}
			I_M(V)=\inf \{|\partial \Omega|\,:\, \Omega_0\subset \Omega \quad \text{and}\quad vol(\Omega-\Omega_0)=V\}
		\end{eqnarray*}
		It is well known that $I_M$ is an strictly increasing function, see for instance \cite{VV}. The standard isoperimetric profile $J_M$ of $M$ is defined as follows
		\begin{eqnarray*}
			J_M(V)=\inf \{|\partial \Omega|\,:\, \Omega\subset M \quad \text{and}\quad vol(\Omega)=V\}
		\end{eqnarray*}
	We also recall that  $I_{TG}$ denotes  the outermost isoperimetric profile of the model hyperbolic metric with totally geodesic convex core $\Omega_{TG}$.
	
	An important geometric invariant in the context of convex co-compact hyperbolic 3-manifold is the Renormalized Volume, denoted by $V_R (M)$. By the duality between conformal metrics at infinity		($\partial_{\infty}\mathbb{H}^3$) and embedded surfaces in $\mathbb{H}^3$, one can construct a region $N\subset M$ so that $V_R(M)$ is equal to the volume of $N$ minus half of the integral of the mean curvature of $\partial N$, see \cite{BBB,VP}. In  \cite{VV}, we proved that  $V_R(M)$ is determined by the  isoperimetric structure at infinity of $M$ via $J_M$. Moreover,
		\begin{eqnarray}\label{ren_vol}
		V_R(M) - |\Omega_0| = \frac{1}{2} \lim_{V\rightarrow \infty} \bigg(I_{TG}(V)- I_M(V)\bigg).
		\end{eqnarray}
	Using this result and the   assumption $V_R(M)> |\Omega_0|$, we   proved   $I_M(V)\leq I_{TG}(V)$ for every $V$. This assumption holds true for example at an open set of metrics that include the almost Fuchsian \cite{U}.

The variational properties of  $V_R(M)$ in the space of convex co-compact hyperbolic 3-manifolds have been  studied in the literature. In particular, the functional $V_R(M,g)$ attains its  minimum  at the Fuchsian locus, see \cite{BBB,VP}. Building on  this result,  we proved regardless the sign of   $V_R(M)-|\Omega_0|$ the following comparison:		
		\begin{theorem}[Vargas Pallete-Viana \cite{VV}]\label{TheoremVV}
			Let $M$ be a convex co-compact hyperbolic 3-manifold which  is either  quasi-Fuchsian  or acylindrical and let  $\Omega_0$ be  the  outermost region. Then
			\[
			I_M(V)\leq I_{TG}(V+|\Omega_0|-|\Omega_{TG}|),
			\]
for every volume $V>0$.			If equality occurs for some $V$, then  $\Omega_0=\Omega_{TG}$.
		\end{theorem}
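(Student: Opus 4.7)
The plan is to combine the asymptotic identity (\ref{ren_vol}) for the renormalized volume with the variational result of \cite{BBB,VP} that $V_R$ attains its global minimum on the totally geodesic (Fuchsian) locus, and then upgrade the resulting asymptotic inequality to a pointwise one by adapting the outer-minimizing comparison argument that gave the previous theorem in \cite{VV}.

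First, I would apply (\ref{ren_vol}) to the TG model itself: since its outermost region is $\Omega_{TG}$, the asymptotic gap $\lim (I_{TG}(V) - I_{TG}(V))$ is zero, so $V_R(M_{TG}) = |\Omega_{TG}|$. The variational minimum of \cite{BBB,VP} then yields $V_R(M) \geq |\Omega_{TG}|$. Setting $\Delta := |\Omega_0| - |\Omega_{TG}|$ and using the asymptotic behavior $I_{TG}(V) = 2V + O(1)$ that follows from Lemma \ref{lemma2} (in particular $I_{TG}(V+\Delta) - I_{TG}(V) \to 2\Delta$ as $V \to \infty$), the identity (\ref{ren_vol}) gives
\[
\lim_{V\to\infty} \bigl(I_{TG}(V+\Delta) - I_M(V)\bigr) \;=\; 2\bigl(V_R(M) - V_R(M_{TG})\bigr) \;\geq\; 0,
\]
which is exactly the desired inequality in the limit.

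The heart of the argument is then upgrading this asymptotic statement to hold for every $V > 0$. I would mimic the outer-minimizing foliation/barrier construction used to prove the previous theorem in \cite{VV}: starting from the outer isoperimetric surface in $M$ that achieves $I_M(V)$ (a strongly stable CMC surface by Lemma \ref{lemma3}), I compare its mean curvature and area with those of the level set of the equidistant foliation in $M_{TG}$ emanating from $\partial \Omega_{TG}$. In the original setting this produced $I_M \leq I_{TG}$ under the hypothesis $V_R(M) > |\Omega_0|$. Here the hypothesis is replaced by the weaker $V_R(M) \geq V_R(M_{TG})$, and the shift by $\Delta$ in the argument of $I_{TG}$ precisely absorbs the possible deficit $|\Omega_0|-|\Omega_{TG}|$ between the outermost region of $M$ and that of the model, allowing the same differential-inequality-plus-asymptotics argument to close.

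For rigidity: if $I_M(V_0) = I_{TG}(V_0 + \Delta)$ for some $V_0 > 0$, tracing through the comparison forces the asymptotic gap in the displayed limit to vanish, hence $V_R(M) = V_R(M_{TG})$; the rigidity clause of the variational minimum in \cite{BBB,VP} then identifies $M$ with $M_{TG}$, and in particular $\Omega_0 = \Omega_{TG}$. The main obstacle is the third step: running the shifted foliation comparison when $|\Omega_0| \neq |\Omega_{TG}|$ requires carefully matching a CMC leaf in $M$ with an equidistant leaf in $M_{TG}$ that encloses a different amount of ``base'' volume, and verifying that the outer-minimizing/barrier properties and the integral identities used in \cite{VV} remain valid once the two base regions no longer have matching volumes.
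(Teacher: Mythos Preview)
Your asymptotic setup is correct and matches the paper: applying \eqref{ren_vol} to the model gives $V_R(M_{TG})=|\Omega_{TG}|$, and the variational minimum of \cite{BBB,VP} then yields the desired inequality in the limit $V\to\infty$. The paper (in the proof it records from \cite{VV}) does exactly this, together with the initial inequality $I_M(0)\le I_{TG}(\Delta)$ coming from Lemma~\ref{area_minimal}, which you omit but which is needed to pin down the function at $V=0$.

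The genuine gap is your step upgrading the asymptotic inequality to a pointwise one. You describe this as a ``foliation/barrier construction'' comparing mean curvatures of CMC leaves, and you acknowledge it is the main obstacle. The actual mechanism is different and more specific: it goes through the \emph{Hawking mass} function
\[
\frac{m_H(V)}{\sqrt{I_M(V)}} \;=\; -2\pi\chi(\partial M) \;-\; \frac{(I_M^{\prime+})^2(V)\,I_M(V)}{4} \;+\; I_M(V),
\]
which vanishes identically along the TG profile and has a definite sign in general (\cite[Lemma~5.4]{VV}). One argues by contradiction: if $f(V)=I_M(V)-I_{TG}(V+\Delta)$ had a nonnegative interior maximum at $V_c$, then $I_M'(V_c)=I_{TG}'(V_c+\Delta)<2$; subtracting the Hawking-mass identities for $I_M$ and for the shifted $I_{TG}$ forces $m_H(V_c)$ to have the wrong sign unless $m_H(V_c)=0$. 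Your ``differential-inequality-plus-asymptotics'' gestures at this shape but does not supply the invariant that makes the first-derivative matching at a maximum actually produce a contradiction; a bare mean-curvature comparison between a CMC leaf in $M$ and an equidistant leaf in $M_{TG}$ does not by itself control areas.

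Your rigidity route also has a gap. From $I_M(V_0)=I_{TG}(V_0+\Delta)$ at a single $V_0$ you cannot directly conclude equality in the asymptotic limit, so you do not reach $V_R(M)=V_R(M_{TG})$ without further argument. In the paper's approach, equality at $V_0$ gives $m_H(V_0)=0$ directly, and the rigidity clause for the Hawking mass (equivalently Lemma~\ref{area_minimal}) then forces each end to be Fuchsian, hence $\Omega_0=\Omega_{TG}$.
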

		By the special case in the main theorem in \cite{AST} we have that  $|\Omega_0|\geq |\Omega_{TG}|$. 
		\vspace{0.2cm}
		
We  show that the outermost isoperimetric  profile comparison above   is enough to prove Theorem \ref{main_result}:

		\begin{proof}[Proof Theorem \ref{main_result}]Let $\Omega_{g,h}$ the Cheeger region of the Fuchsian metric $g_F$ on $\Sigma_g\times\mathbb{R}$:
			\[
			|\partial \Omega_{g,h}|=h_F\, |\Omega_{g,h}|.
			\]
			By Lemma \ref{cheeger_fuchsian},  $\Omega_{g,h}$ is bounded by two  slices of the warped metric $g_F$. Let $\lbrace \Sigma_{g_i} \rbrace$ denote components of $\partial M$ with their respective genus as a sub-index, and let $\Omega_{g_i,h}$ be the respective Cheeger regions in $\Sigma_{g_i}\times\mathbb{R}$.
			As noted, we have  by the special case of the main theorem in \cite{AST} that  $|\Omega_0|> |\Omega_{TG}|$ unless $\Omega_0=\Omega_{TG}$.

			If $|\Omega_0|-|\Omega_{TG}|\geq \frac12\sum_i |\Omega_{g_i,h}|$, then
				\[
				h(M) \leq \frac{|\partial \Omega_0|}{|\Omega_0|} < \frac{\sum_i \frac12|\partial\Omega_{g_i,h}|}{|\Omega_0|} \leq \frac{\sum_i \frac12|\partial\Omega_{g_i,h}|}{|\Omega_0|-|\Omega_{TG}|} \leq \frac{\sum_i |\partial\Omega_{g_i,h}|}{\sum_i |\Omega_{g_i,h}|}=h_F.
				\]
			The first inequality follows from the definition of $h(M)$, second inequality follows from Lemma \ref{area_minimal} and the fact that $\partial\Omega_{g,h}$ has two components, the third inequality from $|\Omega_{TG}|\geq 0$, the fourth inequality follows by the assumption $|\Omega_0|-|\Omega_{TG}|\geq \frac12\sum_i |\Omega_{g_i,h}|$, and the final equality follows by Remark \ref{remark:allcheegerequal}. Hence, $h(M)\leq h_F$ and equality does not occur.

		If $|\Omega_0|-|\Omega_{TG}|< \frac12\sum_i |\Omega_{g_i,h}|$, then   $\frac12\sum_i |\Omega_{g_i,h}|=|\Omega_0|-|\Omega_{TG}|+ V$ for some  $V>0$. We will bound $h(M)$ by applying Theorem \ref{TheoremVV}. For this we need to deal with the model metrics with Fuchsian ends and then apply Lemma \ref{cheeger_fuchsian}. Namely, we have:
				\begin{eqnarray*}
					h(M)&\leq& \frac{J_M(V+|\Omega_0|)}{V+|\Omega_0|} \leq	\frac{I_M(V)}{V+ |\Omega_0|}   \\
					\\
					&\leq& \frac{I_{TG}(|\Omega_0|-|\Omega_{TG}|+V)}{V+|\Omega_0|} 
					\leq \frac{I_{TG}(|\Omega_0|-|\Omega_{TG}|+V)}{V+|\Omega_0|-|\Omega_{TG}|} \\
					\\
					&=& \frac{I_{TG}(\frac12\sum_i |\Omega_{g_i,h}|)}{\frac12\sum_i |\Omega_{g_i,h}|} 
					=  \frac{\sum_i \frac12 I_F(|\Omega_{g_i,h}|)}{\frac12\sum_i |\Omega_{g_i,h}|} =\frac{\sum_i h_F\,|\Omega_{g_i,h}|}{\sum_i |\Omega_{g_i,h}|}= h_F.
				\end{eqnarray*}
				The first two inequalities follow from the definitions of $h(M)$, $J_M(\cdot)$ and $I_M(\cdot)$. The third inequality follows from Theorem \ref{TheoremVV}, while the fourth inequality follows from $|\Omega_{TG}|\geq 0$. The first equality follows from the definition of $V$,  the second equality follows from the fact that each end of  the totally geodesic model  is Fuchsian and the third equality follows from Lemma \ref{cheeger_fuchsian}. The equality $h(M)=h_F$ implies  $|\Omega_{TG}|=0$ and also that   $\Omega_0=\Omega_{TG}$ by Theorem \ref{TheoremVV}. 
		\end{proof}

  \begin{proof}[Proof of Corollary \ref{cor:isoperimetricFuchsian}]  
  There exists $\varepsilon>0$ such that if  $\Omega$ is an isoperimetric region of volume $|vol(\Omega)-vol(\Omega_{\alpha})|<\varepsilon$, then $\Omega$ is a graphical over $\Omega_{\alpha}$ by the uniqueness of $\Omega_{\alpha}$. Since $H_r$ is increasing, we have by the Maximum Principle that  $\Omega=\Omega_r$. On the other hand, if $vol(\Omega)$ is sufficiently large, then $\Omega=\Omega_r$, see \cite{VV}.  
Let $\beta(V)$ denote the area-volume profile with respect to the foliation $\Omega_r$. Hence, $\beta(V)=J(V)$  for volumes near $vol(\Omega_{\alpha})$ and for sufficiently large volumes. Hence, there exists   a  point of   global maximum  $V_m$ for the function \[\varphi(V)=(\beta-J)(V)\] on  $(vol(\Omega_{\alpha}),\infty)$. We denote by $\Omega_m$ an isoperimetric region of volume $V_m$, and let $r_0>\alpha$ be so that $\beta(V_m) = |\partial \Omega_{r_0}|$. Let $F_t$ the  equidistant variation of  $\partial \Omega_m$ and $f(V)$ the corresponding area-volume profile with respect to this family. It follows that $V_m$ is  a local  point of maximum for the function $\overline{\varphi}=\beta-f$. Hence, \[\overline{\varphi}^{\prime}(V_m)=\beta^{\prime}(V_m)-f^{\prime}(V_m)=0.\]As $f$ and $\beta$ are smooth functions, we obtain\[2 H_{r_0}=\beta^{\prime}(V_m)=f^{\prime}(V_m)=2 H_m. \]
Note that $|\Omega_{r_0}|=V_m=|\Omega_m|$. In particular, there exists $p\in \partial \Omega_m$ such that  $r\geq r_0$. If $r_m$ is the  maximum height over $\partial \Omega_m$, then the Maximum Principle implies that $H_m\geq H_{r_m}>H_{r_0}$, unless $\Omega_m=\Omega_{r_0}$. Therefore, $J(V)=\beta(V)$ for every $V\geq vol(\Omega_{\alpha})$.
\end{proof}


\begin{thebibliography} {CaGo}
			
			\bibitem{AST}
			I. Agol, P. Storm, and W. P. Thurston, \emph{Lower bounds on volumes of hyperbolic Haken 3-manifolds. With an
				appendix by Nathan Dunfield}. J. Amer. Math. Soc. 20 (2007), no. 4, 1053-1077.
			
	
			\bibitem{Bowen}
			
			R. Bowen, \emph{Hausdorff dimension of quasi-circles}, Inst. Hautes tudes Sci. Publ. Math. No. 50, (1979), 11–25.
			
   \bibitem{B}	S. Brendle,  \emph{Constant mean curvature surfaces in warped product manifolds}. Publ. Math. Inst. Hautes Études Sci. 117 (2013), 247-269.
			
			\bibitem{BBB}
			M. Bridgeman, J. Brock, and K. Bromberg, \emph{Schwarzian derivatives, projective structures, and the weil-petersson gradient flow for renormalized volume}. Duke Math. J. Volume 168, Number 5 (2019), 867-896.
			

			\bibitem{Buser}
			P. Buser, \emph{A note on the isoperimetric constant}. Ann. Sci. École Norm. Sup. (4) 15 (1982), no. 2, 213–230.
			
			\bibitem{C}
			J. Cheeger,  \emph{A lower bound for the smallest eigenvalue of the Laplacian}. Problems in analysis (Papers dedicated to Salomon Bochner, 1969), pp. 195-199.
			

			
			\bibitem{D}
			P. Doyle, \emph{On the bass note of a Schottky group}.
			Acta Math. 160 (1988), no. 3-4, 249-284.
			

			
			\bibitem{G}
			M. Gromov, \emph{Paul Levy's isoperimetric inequality}. Pr\'{e}publication I.H.E.S. (1980).
			
			
			\bibitem{HMR}
			
			O. Hijazi, S. Montiel, and S. Raulot, \emph{
			The Cheeger constant of an asymptotically locally hyperbolic manifold and the Yamabe type of its conformal infinity}.
			Comm. Math. Phys. 374 (2020), no.2, 873-890.
			
			\bibitem{LW}
			
			P. Li and J. Wang, \emph{Complete manifolds with positive spectrum II}, J. Differential Geom.
		62 (2002) 143-162.
			
			


  
			
			\bibitem{M}
			O. Munteanu, \emph{On a characterization of the complex hyperbolic space}. 
			J. Differential Geom. 84 (2010), no. 3, 611-621.
   
			\bibitem{R}
  S. Rafalski, \emph{A relative isoperimetric inequality
for certain warped product spaces}. Adv. Geom. 12 (2012), 639-646.
			

			
			\bibitem{Schoen}
			R. Schoen, \emph{A lower bound for the first eigenvalue of a negatively curved manifold}. J. Differential Geometry, 17 (1982), no. 2, 233-238.
			
			\bibitem{S}
			D. Sullivan, \emph{Related aspects of positivity in Riemannian geometry}. J. Differential Geom. 25 (1987), no. 3, 327-351.
			
			\bibitem{U}	K. K. Uhlenbeck, \emph{Closed minimal surfaces in hyperbolic 3-manifolds}. Seminar on minimal submanifolds, 147–168, Ann. of Math. Stud., 103, Princeton Univ. Press, Princeton, NJ, 1983.
			
			\bibitem{VP}
			F. Vargas Pallete, \emph{Additive continuity of the renormalized volume under geometric limits}, arXiv:1708.04009[math.DG].
			
			
			\bibitem{VV}
			F. Vargas Pallete and C. Viana, \emph{Isoperimetric interpretation for the renormalized volume of convex co-compact hyperbolic 3-manifolds}.  	To appear at the American Journal of Mathematics (2023).

			
			
			
			\bibitem{W}
			X. Wang, \emph{On conformally compact Einstein manifolds}. Math. Res. Lett. 8 (2001), no. 5-6, 671-688.

                \bibitem{Wang2002}
                X. Wang, \emph{A new proof of Lee's theorem on the spectrum of conformally compact
Einstein manifolds}. Comm. Anal. Geom. 10 (2002), no. 3, 647-651.
			
			\bibitem{W2}
			X. Wang, \emph{Harmonic functions, entropy, and a characterization of the hyperbolic
			space}. J. Geom. Anal. 18, 272-284 (2008).
			
			\bibitem{Y}
			S. T. Yau, \emph{Isoperimetric constants and the first eigenvalue of a compact Riemannian manifold}. Ann. Sci. École Norm. Sup. (4) 8 (1975), no. 4, 487-507.
			
			
			
			
			
			
			
			
			
		\end{thebibliography}
\end{document}